\titlespacing*{\section}{0pt}{*2}{*0}
\titlespacing*{\subsection}{0pt}{*2}{*0}
\titlespacing*{\subsubsection}{0pt}{*1}{*0}
\newtheorem{theorem}{Theorem}
\newtheorem{lemma}{Lemma}
\newtheorem{proposition}{Proposition}
\newtheorem{corollary}{Corollary}
\newtheorem{definition}{Definition}
\newtheorem{problem}{Problem}
\newtheoremstyle{compacttheorem}
{3pt}
{3pt}
{}
{}
{}
{}
{}
{}
\newtheoremstyle{break}
{\topsep}{\topsep}%
{\itshape}{}%
{\bfseries}{}%
{\newline}{}%
\theoremstyle{break}
\title{A Bounded Formulation for The School Bus Scheduling Problem}
\author[1]{\rm Liwei Zeng}
\author[2]{\rm Sunil Chopra}
\author[1]{\rm Karen Smilowitz}
\affil[1]{Department of Industrial Engineering and Management Sciences}
\affil[2]{Kellogg School of Management}
\affil[ ]{Northwestern University}
\begin{document}
\maketitle
\begin{abstract}
This paper proposes a new formulation for the school bus scheduling problem (SBSP) which optimizes school start times and bus operation times to minimize transportation cost. Our goal is to minimize the number of buses to serve all bus routes such that each route arrives in a time window before school starts. We present a new time-indexed integer linear programming (ILP) formulation for this problem. Based on a strengthened version of the linear relaxation of the ILP, we develop a dependent randomized rounding algorithm that yields near-optimal solutions for large-scale problem instances. We also generalize our methodologies to solve a robust version of the SBSP.
\end{abstract}

\textbf{Keywords:} school bus scheduling problem; school bus routing problem; time-indexed formulation; randomized rounding algorithm.

\section{Introduction} \label{sec:intro}

Across the United States, public school districts are facing critical budget challenges (\citet{bidwell2015}) and transportation is a common area to look for cost reduction. Many school districts have changed school start times to lower transportation cost. By staggering the start times of different schools, a bus can be re-used to complete more than one route. The marginal cost of re-using a bus for a second route is significantly lower than that of adding a new bus. In this paper, we study the problem of jointly determining school start times and bus route operation times in a way that minimizes the number of buses needed to complete all bus routes.

The school bus routing problem (SBRP) has been extensively studied in the literature (\citet{newton1969design}, \citet{desrosiers1980overview}, \citet{park2010school}). The SBRP consists of two groups of subproblems: routing and scheduling. The routing subproblems include selecting bus stops, assigning students to stops, and connecting stops into bus routes. This paper focuses on the scheduling subproblems, which determine school start times, bus route operation times, and route-to-bus assignment. We use the term ``school bus scheduling problem'' (SBSP) from \citet{fugenschuh2009solving} to represent the joint problem of determining school start times, bus route operation times, and route-to-bus assignment.

\begin{problem}{\textbf{(School bus scheduling problem)}}\label{def:pscst}
	Given a set of schools and a set of routes for each school, the school bus scheduling problem determines the start times of schools and the arrival times of routes such that each route arrives in a time window before school starts. After school start times and route arrival times are determined, routes are assigned to buses such that routes assigned to the same bus operate on disjoint time intervals. The goal is to minimize the number of buses needed to complete all the routes.
\end{problem}

The SBSP is NP-hard, which can be proved by a reduction from the balanced partition problem (\citet{garey2002computers}). Previous work has identified both exact and inexact algorithms for the SBSP, such as column generation (\citet{desrosiers1986transcol}), branch-and-cut (\citet{fugenschuh2009solving}) and local search (\citet{bertsimas2019optimizing}). While the exact algorithms perform well on small instances, they encounter computational challenges as the problem scale increases, which can become an issue when being applied to large school districts. Inexact algorithms scale well but typically lack theoretical guarantees.

In this work, we propose a new approach to solve SBSP that scales well and has a provable performance guarantee. Having a fast solution technique with a provable performance guarantee allows decision makers to more easily incorporate the impact on transportation cost into various strategic planning decisions. Our approach also provides decision makers with a set of low transportation cost solutions that can be used to select one that best satisfies a variety of other requirements. As a result, our work aims to support strategic planning within school districts in contrast to other work that may be more operationally focused and designed to provide immediately actionable plans. We formulate the SBSP as a time-indexed ILP where binary variables are used to indicate school start times and route arrival times. There are several papers (\citet{lenstra1990approximation}, \citet{shmoys1993approximation}, \citet{schulz1996scheduling}) in the scheduling literature that have used LP relaxations of suitable ILP formulations to develop efficient approximation algorithms with good lower bounds. We join this line of work by first strengthening the time-indexed ILP through characterizing the convex hulls of variable subspaces. The strengthened formulation is shown to have a bounded integrality gap. We then develop a randomized rounding algorithm which is near-optimal for large-scale instances. The randomized nature of the algorithm provides more flexibility for decision makers, allowing them to select from multiple high-quality solutions, such that additional factors may be considered, see \cite{banerjee2019incorporating} which includes equity in determining school start times.

The main contribution of this paper is as follows:

\begin{enumerate}
	
	\item We develop a new time-indexed ILP formulation for the SBSP. Based on the LP relaxation of the ILP, we propose a randomized rounding algorithm with a bounded performance guarantee that provides multiple high-quality solutions to the SBSP efficiently. Our approach enables school districts to quickly estimate the impact on transportation cost of different plans when considering school start time changes. The multiple high quality solutions provided by the randomized rounding algorithm give decision makers more options to incorporate external considerations while maintaining low transportation cost.
	
	\item We are able to solve small-scale instances of SBSP to optimality using the new ILP formulation. For large-scale instances, we prove that the randomized rounding algorithm is near-optimal. The near-optimality of our algorithm provides accurate estimation of potential cost-saving of different policies, which allows decision makers to focus on plans with sufficient potential for cost reduction. In an on-going collaboration with a public school district, such high quality and efficient solution approaches are critical for the evaluation of scheduling strategies.
	
	\item We introduce a robust version of the SBSP. Robustness is a critical issue in school bus scheduling because school start times cannot be redesigned frequently, yet bus routes may vary from year to year (due to changes in demographic distribution, traffic conditions, etc.). From a strategic planning perspective, school districts are interested in school start time plans that will remain consistent over time and still lower transportation cost. We generalize the ILP model and solution approaches to the SBSP and provide a similar randomized rounding algorithm for the robust SBSP with provable performance guarantee.
\end{enumerate}

The remainder of this paper is organized as follows. In Section \ref{sec:literature} we review related work on school bus scheduling, machine scheduling and bin packing. In Section \ref{sec:ILP} we present a time-indexed ILP for the SBSP, which is strengthened with valid cuts. In Section \ref{sec:rounding}, we develop a randomized rounding algorithm for the SBSP based on the strengthened LP-relaxation that is provably near-optimal for large-scale problem instances. Our methodology is generalized in Section \ref{sec:robust} for  a robust version of the SBSP. In Section \ref{sec:numerical}, we present computational results that complement our theoretical findings. We conclude in Section \ref{sec:conclusion} with a summary of results and discussion of future research.


\section{Literature Review} \label{sec:literature}
We summarize related literature in school bus scheduling. We also link the SBSP to two fundamental combinatorial optimization problems: the machine scheduling problem and the bin packing problem.

\subsection{School Bus Scheduling Problem}\label{subsec:literature-school-bus}
As a part of the school bus routing problem, the SBSP specifies school start times and route arrival times to maximize bus usage. \citet{swersey1984scheduling} present a mixed-integer programming (MIP) formulation to determine route arrival times when school start times are fixed. The MIP formulation is further simplified to an integer programming (IP) formulation by discretizing the timeline into unit intervals. \citet{desrosiers1986transcol} determine school start times and route arrival times sequentially. They formulate the problem as a min-max binary program that minimizes the maximum number of routes operating during the same time period. The formulation is solved using column generation for small-sized instances. For large-scale instances, they present a heuristic that alternately updates school start times and route arrival times. \citet{fugenschuh2009solving} presents an IP formulation that determines school start times and route arrival times simultaneously and solves it using a branch-and-cut algorithm. \cite{banerjee2019incorporating} uses a variant of the time-indexed formulation developed in this paper to consider equity (defined by the disutility associated with changing school start times) in the SBSP.

The problem of jointly optimizing bus routes and school bell times has also received attention in recent literature. \citet{spada2005decision} discuss the combined problem of bus route generation and bus scheduling where they provide an initial feasible solution by solving an IP, which is then improved using three different heuristics. \citet{bertsimas2019optimizing} construct a set of routing scenarios for each school using a  ``bi-objective routing decomposition algorithm'', and then formulate the school bell selection problem as a generalized quadratic assignment problem. They implement their algorithms in collaboration with the Boston Public Schools, which has led to \$5 million saving per year.

In contrast to previous work, we focus on providing an efficient tool box with provable performance guarantees to support strategic decision-making for school districts. We also extend our approach to help school districts identify solutions that are robust to external changes.

\subsection{Machine Scheduling and Bin Packing}\label{subsec:literature-maching-scheduling}
Our work is related to machine scheduling problems that consider job priorities. In these problems, jobs with higher priority must arrive (or end) earlier than others. \citet{ikura1986efficient} study the batched scheduling problem where jobs in the same batch have the same priority. A simplified version of the SBSP (where each route arrives exactly at the school's start time) can be restated as the batched machine scheduling problem where routes in the same school have the same priority. The SBSP is also similar in spirit to the 1-dimensional bin packing problem (\citet{de1981bin}, \citet{scholl1997bison}) where objects are bus routes, and bins are buses. The constraints on route arrival times can be transformed to constraints on objects' relative locations.


\section{Integer Linear Programming Formulation for the SBSP}\label{sec:ILP}

We first introduce a time-indexed formulation for the SBSP. We then strengthen this formulation by identifying a semi-decomposable structure of the first formulation and characterizing the convex hulls of variable subspaces. The optimal fractional solution of the strengthened formulation is used in Section \ref{sec:rounding} to develop a randomized rounding algorithm.

\subsection{Preliminaries}\label{subsec:transition}
We use the following notation throughout the paper. Let $\mathcal{S}$ be a set of schools and let $\mathcal{R}=\bigcup_{s\in\mathcal{S}}\mathcal{R}_s$ be a set of routes, where $\mathcal{R}_s$ is the set of routes for school $s$. We discretize the timeline into $T$ unit intervals and assume that each school (route) chooses a start (arrival) time from $[T]=\{1,2,\ldots,T\}$. For each school $s$, $l_s\in \mathbb{N}$ represents the length of its time window, in which routes for this school must arrive. Specifically, if school $s$ starts at time $t_s$, all routes for this school must arrive in the interval $[t_{s}-l_{s},t_{s}]$ ($[1,t_s]$ if $t_s-l_s\leq 0$). Finally, we use $r_i\in \mathbb{N}$ to denote the length of route $i$.

When school districts are compact, one can make the assumption of constant transit time between routes (this assumption is reasonable in our collaboration where the range of bus transition times is significantly smaller than route travel and stopping times). Importantly, this assumption allows us to leverage more structural information and design fast algorithms with provable bounds.  To generalize our results, we present methods and numerical studies to apply our approach to  settings where this assumption does not hold.

In the following discussion, the constant transition time is added to the beginning of each route (e.g. a 30-minute route with 10-minute transition becomes a 40-minute route) and we assume that there is no transition time between routes. We note that, in practice, the transition time should not be added to the first route taken by each bus. However, we claim that adding the same amount of time to each route (including the first route taken by each bus) will not affect the optimal solution. The time added to the first route can be viewed as the travel time from the depot to the first stop of the first route, which does not affect the route connection pattern.

\subsection{A Time-indexed Formulation for the SBSP}\label{subsec:time-index}
Time-indexed formulations have been widely applied to scheduling problems to obtain strong lower bounds (\citet{dyer1990formulating}, \citet{sousa1992time}, \citet{queyranne1994polyhedral}. In these formulations, the time horizon is divided into unit time periods, and binary variables are used to indicate start/end times of jobs. 

We use a similar approach to develop a time-indexed formulation for the SBSP. Our formulation is similar to the one in \citet{desrosiers1986transcol}, where binary variables are used to indicate both school start times and route arrival times. The novelty of our formulation is a set of constraints that captures the relation between school start times and route arrival times.

We start with the following observation in \citet{desrosiers1986transcol} that transforms the minimum number of buses to an easy-to-compute quantity.

\begin{proposition}[\citet{desrosiers1986transcol}]\label{prop:numroute}
	Given a set of routes with fixed operation times, the minimum number of buses to complete all the routes is equal to the maximum number of routes in operation during the same time period.
\end{proposition}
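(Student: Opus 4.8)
The plan is to prove the two inequalities separately, with the nontrivial direction handled by an explicit greedy assignment. Write $\omega$ for the maximum, over all time units $t \in [M]$, of the number of routes in operation at $t$ (here a type-$k$ route starting at time $m$ is ``in operation'' exactly at the time units $m, m+1, \dots, m+k-1$, and ``the same time period'' is read as a common time unit), and write $B^{*}$ for the minimum number of buses. The easy direction, $B^{*} \ge \omega$, is immediate: fix a time unit $t$ at which $\omega$ routes are simultaneously in operation; since routes assigned to the same bus must occupy disjoint time intervals, no two of these $\omega$ routes can share a bus, so every feasible assignment uses at least $\omega$ buses.

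For the reverse inequality $B^{*} \le \omega$ I would exhibit a feasible assignment using at most $\omega$ buses. Order the routes by nondecreasing starting time, breaking ties arbitrarily, and process them in this order with a pool of $\omega$ buses available. When processing a route $r$, call a bus \emph{busy} if some already-assigned route occupies the time unit at which $r$ starts, and \emph{free} otherwise; assign $r$ to any free bus. Two facts need to be checked. First, feasibility: by the start-time ordering, every route previously placed on the bus chosen for $r$ must end strictly before $r$ begins (otherwise that bus would have been busy), so the routes on each bus have pairwise disjoint time intervals. Second, a free bus always exists: each busy bus, at the moment $r$ is processed, carries a distinct route that together with $r$ is in operation at $r$'s starting time unit, so there are at most $\omega - 1$ busy buses, leaving at least one free. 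Combining the two inequalities yields $B^{*} = \omega$.

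The main obstacle is the counting step in the second fact: one must argue carefully that at the time unit where $r$ starts, every currently busy bus hosts a route that is genuinely co-active with $r$ there, so that the bound ``at most $\omega - 1$ busy buses'' is valid; this in turn relies on the start-time ordering, which guarantees that no not-yet-processed route contributes to the congestion at that instant. This is precisely the classical fact that interval graphs are perfect (clique number equals chromatic number), realized here by the earliest-start-time greedy rule. I would therefore either reproduce the short argument above or cite that fact, noting that the greedy procedure itself doubles as an efficient algorithm for the fixed-schedule subproblem. The degenerate case of no routes ($\omega = B^{*} = 0$) is trivial and can be dispatched in a sentence.
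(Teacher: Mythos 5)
Your proof is correct: the lower bound from a maximally congested time unit and the upper bound via the earliest-start greedy assignment (the interval-graph clique-equals-chromatic-number argument) together give the equality. The paper does not prove this proposition itself --- it cites \citet{desrosiers1986transcol} for the statement and \citet{olariu1991optimal} for exactly the start-time-ordered greedy assignment you describe --- so your argument is essentially the one the paper relies on, just written out in full.
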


Given the arrival time of each route, Proposition \ref{prop:numroute} states that the minimum number of buses required is easy to compute. Specifically, we can divide the timeline into unit intervals and count the number of routes in operation during each interval. From Proposition \ref{prop:numroute}, the minimum number of buses required to serve all routes is equal to the maximum number recorded over all intervals. Moreover, with fixed route operation times, the route-to-bus assignment problem is equivalent to an interval graph coloring problem, which can be solved in polynomial time (see \citet{olariu1991optimal} for details). Motivated by this observation, we introduce a time-indexed formulation in which the maximum number of routes in operations during the same unit interval is minimized.

For each route $i\in \mathcal{R}$ and time $t\in [T]$, let $x_{i,t}$ be a binary variable such that $x_{i,t}=1$ if route $i$ arrives at time $t$. Similarly, for each school $s\in \mathcal{S}$ and time $t\in [T]$, we introduce a binary variable $y_{s,t}$ such that $y_{s,t}=1$ if school $s$ starts at time $t$. The variables are defined based on route arrival times and school start times for notational convenience. We extend the definition of $x,y$ variables to $t\notin [T]$ and define that $x_{i,t}=0, y_{i,t}=0$ if $t\notin [T]$.

The SBSP can be formulated as follows (formulation ILP1).

\begin{align} 
	\textrm{min}~~&z \label{ILP1} \tag{ILP1} \\
	\textrm{s.t.}~~&\sum_{t=1}^{T} x_{i,t}=1 & \forall i\in \mathcal{R}_s, s\in \mathcal{S} \label{eqn:1a} \tag{1a} \\
	&\sum_{t=1}^{T}y_{s,t}=1 &\forall s\in \mathcal{S} \label{eqn:1b} \tag{1b} \\
	&x_{i,t}\leq \sum_{t^{'}=t}^{t+l_s} y_{s,t^{'} } &\forall i\in \mathcal{R}_s, s\in \mathcal{S}, t\in [T] \label{eqn:1c} \tag{1c} \\
	&\sum_{i\in \mathcal{R} }\sum_{t^{'}=t}^{t+r_i-1} x_{i,t^{'} }\leq z &\forall t\in [T] \label{eqn:1d} \tag{1d} \\
	&x_{i,t}\in \{0,1\} &\forall i\in \mathcal{R}_s, s\in \mathcal{S}, t\in [T] \label{eqn:1e} \tag{1e}\\
	&y_{s,t}\in \{0,1\} &\forall s\in \mathcal{S}, t\in [T] \label{eqn:1f} \tag{1f}
\end{align}

Assignment constraints \eqref{eqn:1a} and \eqref{eqn:1b} ensure that each route (school) is assigned to one arrival (start) time. Time-window constraints \eqref{eqn:1c} enforce that each route arrives in a time window before school starts. Constraints \eqref{eqn:1d} link the decision variables to the objective by Proposition \ref{prop:numroute}. For each $t\in [T]$, the left-hand side of \eqref{eqn:1d} computes the number of routes in operation during unit interval $[t-1,t]$. By introducing a constraint for each $t\in [T]$, \eqref{eqn:1d} implies that the objective $z$ is equal to the maximum number of routes in operation during the same unit time interval.

\ref{ILP1} can also incorporate various types of restrictions on school start times and route operation times by fixing part of the decision variables. As shown later, this does not affect the theoretical performance guarantee of the algorithm.

Although the time-indexed formulation is known to provide better bounds, it is hard to solve directly due to size. The number of binary variables in \ref{ILP1} is $(|\mathcal{R}|+|\mathcal{S}|)T$. For a district with 20 schools, 100 routes and $T=120$ (2-hour time horizon divided into 1-minute intervals), this requires solving an ILP with 14,400 variables (and even more constraints). Thus, we first strengthen the LP-relaxation of ILP1 by adding valid cuts.

\subsection{A Strong Integer Linear Programming Formulation}\label{subsec:strong-formulation}

\begin{figure}
	\centering
	\includegraphics[width=0.7\textwidth]{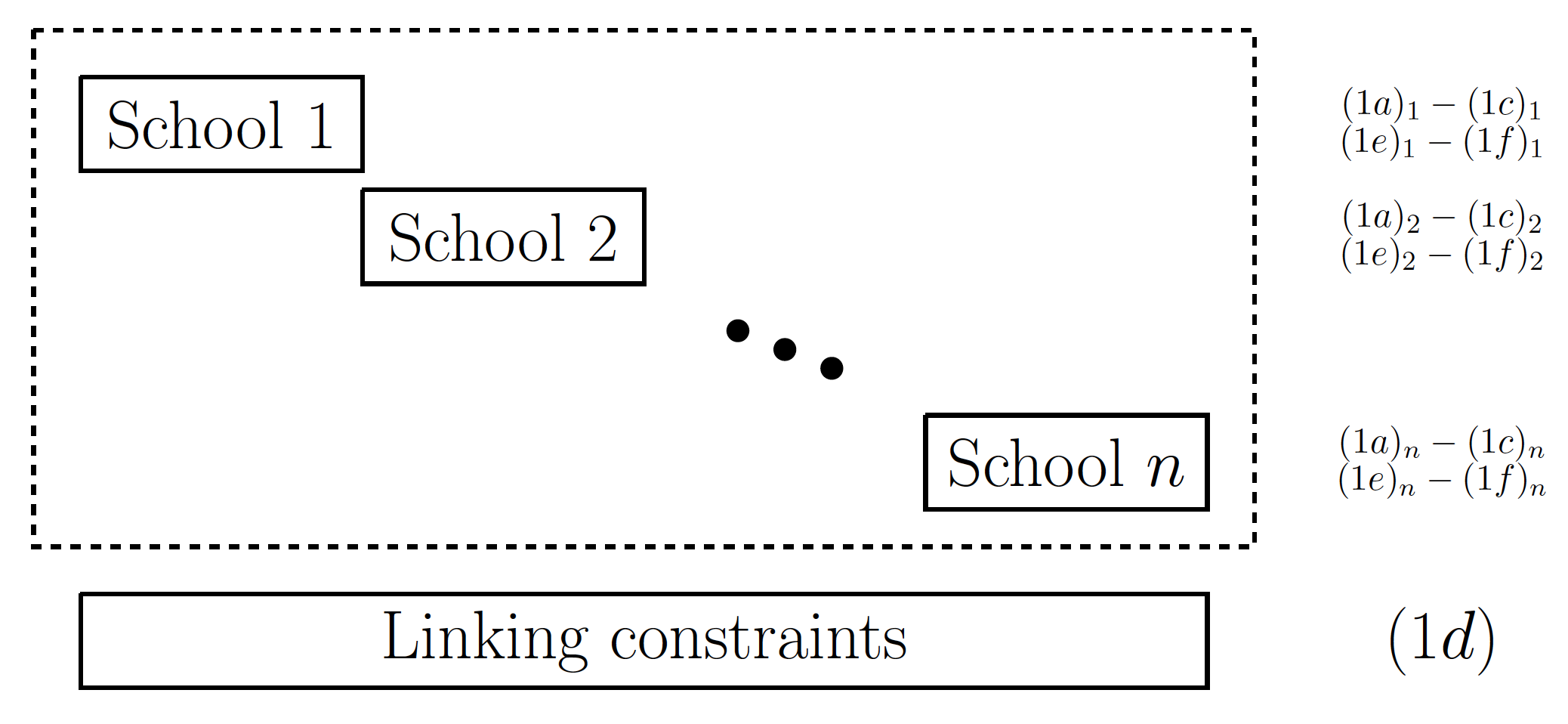}
	\caption{Block-structure of \ref{ILP1}}
	\label{fig:block}
\end{figure}

We present a strengthened formulation of \ref{ILP1}. The LP relaxation of the strengthened formulation is used in Section \ref{sec:rounding} to design a randomized rounding algorithm that solves large-scale instances to near-optimality. We also show in Section \ref{sec:numerical} that the strengthened formulation is able to provide tight lower bounds in practice through a numerical study.

Observe that \ref{ILP1} has a semi-decomposable structure in that all its constraints, except for \eqref{eqn:1d}, only contain variables for a single school. By suitably reordering the variables and constraints, \ref{ILP1} exhibits the block-structure shown in Figure \ref{fig:block}.

For each school$s\in \mathcal{S}$ we define
\[V_s=\big\{(x_{i,t},y_{s,t})~\big |~i\in \mathcal{R}_s, (x_{i,t},y_{s,t})~\textrm{satisfies \eqref{eqn:1a},\eqref{eqn:1b},\eqref{eqn:1c},\eqref{eqn:1e} and \eqref{eqn:1f}} \big\}. \]
$V_s$ can be interpreted as the set of variables satisfying the block of constraints for school $s$ in Figure \ref{fig:block}. Hence, \ref{ILP1} can be rewritten as
\begin{align*} 
	\textrm{min}~~&z\\
	\textrm{s.t.}~~&\sum_{i\in \mathcal{R} }\sum_{t^{'}=t}^{t+r_i-1} x_{i,t^{'} }\leq z &\forall t\in [T]\\
	&(x_{i,t},y_{s,t})\in V_{s} &\forall s\in \mathcal{S}
\end{align*}

A natural way to strengthen the LP-relaxation of \ref{ILP1} is by completely defining Conv$(V_{s})$, the convex hull of $V_{s}$. The following theorem provides a complete characterization of Conv$(V_{s})$.
\begin{theorem}[Convex Hull of $V_{s}$]\label{thm:convex-hull}
	For any $s\in \mathcal{S}$, $Conv(V_s)=C_s$, where $C_{s}$ is defined by the following constraints:
	\begin{enumerate}[label=(C).]
		\item $\sum_{t=1}^{T} x_{i,t}=1, \forall i\in \mathcal{R}_s$
		\item $\sum_{t=1}^{T} y_{s,t}=1$
		\item $\sum_{t^{'}=1}^{t} x_{i,t^{'} }\leq \sum_{t^{'}=1}^{t+l_s} y_{s,t^{'} }, \forall i\in \mathcal{R}_s, t\in [T]$
		\item $\sum_{t^{'}=1}^{t} y_{s,t^{'} }\leq \sum_{t^{'}=1}^{t} x_{i,t^{'} }, \forall i\in \mathcal{R}_s, t\in [T]$
		\item $0\leq x_{i,t} \leq 1, \forall i\in \mathcal{R}_s, t\in [T]$
		\item $0\leq y_{s,t} \leq 1, \forall t\in [T]$.
	\end{enumerate}
\end{theorem}

$(C1)-(C2)$ are the assignment constraints, and $(C5)-(C6)$ are relaxations of the integrality constraints. $(C3)-(C4)$ combined can be viewed as a stronger form of the time-window constraint \eqref{eqn:1c}. Specifically, for any school $s$ and route $i\in \mathcal{R}_s$, $(C3)$ implies that the start time of $s$ is no later than the arrival time of $i$ plus a time-window length $l_s$. $(C4)$ implies that the arrival time of $i$ is no later than the start time of $s$.

\textbf{Proof of Theorem \ref{thm:convex-hull}.}\label{proof:convexhull}
We first prove that $Conv(V_s)\subseteq C_{s}, \forall s\in \mathcal{S}$. Since the set of extreme points of $Conv(V_s)$ is exactly $V_s$, it suffices to show that $V_{s}\subseteq C_{s}$. This is equivalent to saying that $(x_s,y_s)$ satisfies $(C1)-(C6)$ for any $(x_s,y_s)\in V_s$. It is easy to see that $(C1),(C2),(C5)$ and $(C6)$ follow from the assignment constraints \eqref{eqn:1a}-\eqref{eqn:1b}, and the binary constraints \eqref{eqn:1e}-\eqref{eqn:1f}. It remains to show that $(x_s,y_s)$ satisfies $(C3)$ and $(C4)$ as well.

For any $(x_s,y_s)\in V_s$ and $i\in \mathcal{R}_{s}$, let $t_i,t_s\in [T]$ be indices such that $x_{i,t_i}=1$ and $y_{s,t_s}=1$. In constraint \eqref{eqn:1c}, let $t=t_i$. We have $1\leq \sum_{t^{'}=t_i}^{t_i+l_s} y_{s,t^{'} }$, which implies that $t_i\leq t_s\leq t_i+l_s$. Note that $(C3)$ and $(C4)$ can be derived from $t_s\leq t_i+l_s$ and $t_i\leq t_s$, respectively. Therefore, $(x_s,y_s)$ satisfies $(C1)-(C6)$.

We next prove that $C_{s}\subseteq Conv(V_s), \forall s\in \mathcal{S}$. As a first step, we show that every integral point in $C_{s}$ is also in $V_s$. To see that, for any integral $(x_s,y_s)\in C_{s}$ and $i\in \mathcal{R}_{s}$, let $t_i,t_s\in [T]$ be indices such that $x_{i,t_i}=1$ and $y_{s,t_s}=1$. Similar to the statement in the last paragraph, $(C3)$ and $(C4)$ imply that $t_s\leq t_i+l_s$ and $t_i\leq t_s$, which combined indicate that $(x_s,y_s)$ satisfies \eqref{eqn:1c}. Besides, since \eqref{eqn:1a}, \eqref{eqn:1b}, \eqref{eqn:1e} and \eqref{eqn:1f} naturally follow from $(C1), (C2), (C5)$ and $(C6)$, we have $(x_s,y_s)\in V_s$.

Now consider the linear projection $f: C_{s}\rightarrow P_s$ defined by
\[ f(x_s,y_s)=(X_s,Y_s)~\textrm{if}~X_{i,t}=\sum_{t^{'}=1}^{t} x_{i,t^{'} }~\textrm{and}~Y_{s,t}=\sum_{t^{'}=1}^{t} y_{s,t^{'} }, \]
where $P_s$ is the polytope defined by the following linear inequalities:
\begin{enumerate}[label=(P)]
	\item $X_{i,T}=1, \forall i\in \mathcal{R}_s$
	\item $Y_{s,T}=1$
	\item $X_{i,t}\leq Y_{s,\textrm{min}\{t+l_s,T\} }, \forall i\in \mathcal{R}_s, t\in [T]$
	\item $Y_{s,t}\leq X_{i,t}, \forall i\in \mathcal{R}_s, t\in [T]$
	\item $X_{i,t}\leq X_{i,t+1}, \forall i\in \mathcal{R}_s, t\in [T-1]$
	\item $Y_{s,t}\leq Y_{s,t+1}, \forall t\in [T-1]$.
\end{enumerate}

The linear constraints $(P1)-(P6)$ correspond to $(C1)-(C6)$ by transforming $(x,y)$ variables to $(X,Y)$ variables. It is easy to verify that the inverse projection $f^{-1}: P_s\rightarrow C_{s}$ exists, and is linear:
\begin{equation}\label{eqn:f-inverse}
	f^{-1}(X_s,Y_s)=(x_s,y_s)~\textrm{if}~x_{i,t}=X_{i,t}-X_{i,t-1 }~\textrm{and}~y_{s,t}=Y_{s,t}-Y_{s,t-1}.
\end{equation}
Here we define $X_{i,0}=0$, $Y_{s,0}=0$.

Moreover, the matrix containing $(P1)-(P6)$ is totally unimodular since it has no more than two non-zero entries in each column (constraint) and every column (constraint) that contains two non-zero entries has exactly one $+1$ and one $-1$ (\cite{nemhauser1988integer}). Since the matrix defined by $(P1)-(P6)$ is totally unimodular, all extreme points of $P_s$ are integral.

For any $(\tilde{x}_s,\tilde{y}_s)\in C_{s}$, note that $f(\tilde{x}_s,\tilde{y}_s)\in P_s$, by representing $f(\tilde{x}_s,\tilde{y}_s)$ as a linear combination of extreme points of $P_s$, we have
\begin{align*}
	(\tilde{x}_s,\tilde{y}_s)&=f^{-1}(f(\tilde{x}_s,\tilde{y}_s))\\
	&=f^{-1}(\sum_{k=1}^{K}\lambda_{k}p_{k})\\
	&=\sum_{k=1}^{K}\lambda_{k}f^{-1}(p_k)
\end{align*}
where each $\lambda_{k}$ is non-negative, $\sum_{k=1}^{K}\lambda_{k}=1$, and all $p_k$ are integral extreme points of $P_s$. The last step applies the linearity of $f^{-1}$. Figure \ref{fig:projection} visualizes each step of the projection.

\begin{figure}
	\centering
	\includegraphics[width=0.6\textwidth]{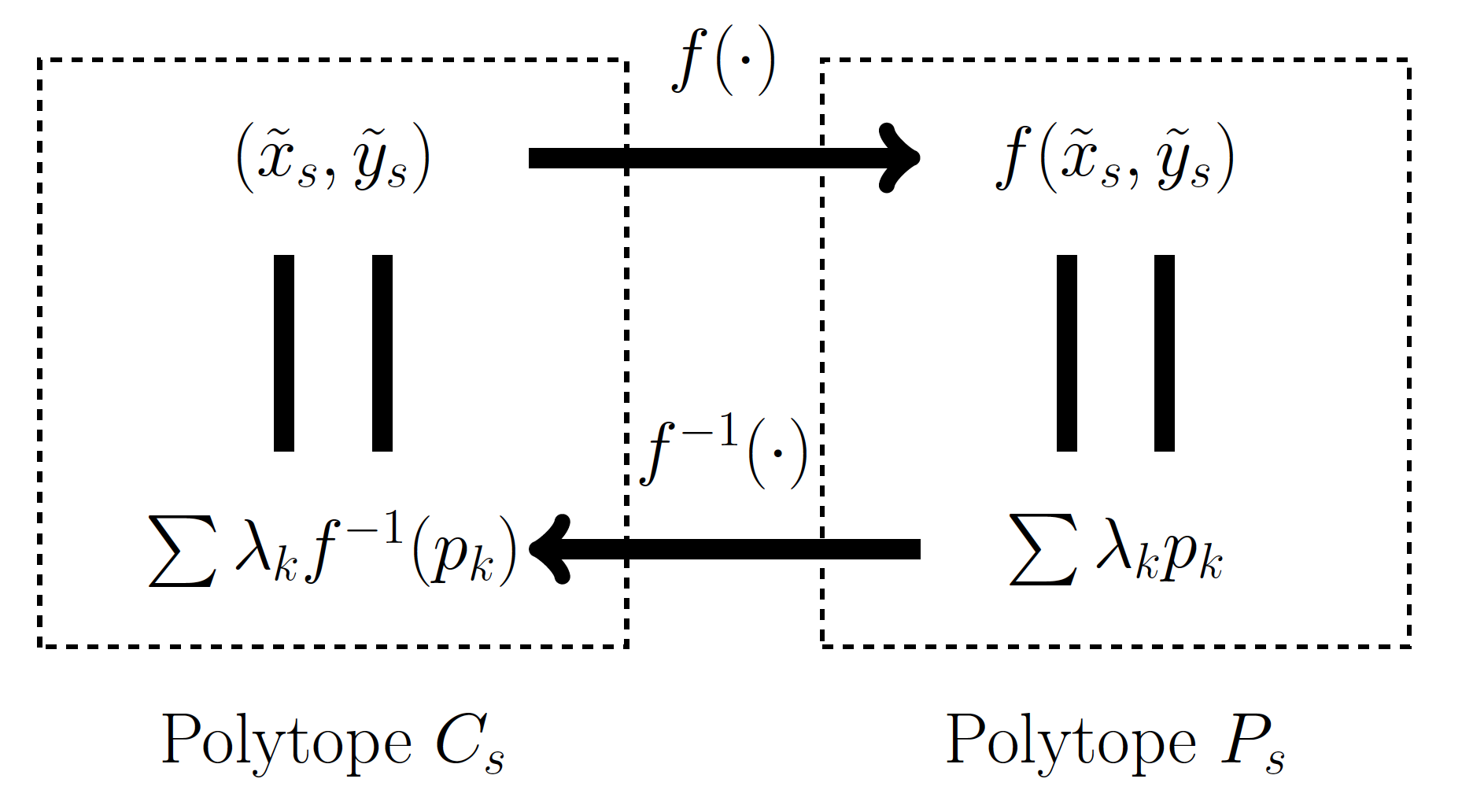}
	\caption{Projections between polytopes $C_s$ and $P_s$}
	\label{fig:projection}
\end{figure}

From equation \eqref{eqn:f-inverse}, $f^{-1}(p_k)\in C_{s}$ is also integral given that $p_k$ is integral for every $k$. This means $(\tilde{x}_s,\tilde{y}_s)$ can be written as a non-negative linear combination of integral points in $C_{s}$. Since these integral points are also in $V_s$, $(\tilde{x}_s,\tilde{y}_s)$ must be in the convex hull of $V_s$, which immediately implies $C_{s}\subseteq Conv(V_s)$. This completes the proof of Theorem \ref{thm:convex-hull}.
\hfill\ensuremath{\square}

Using Theorem \ref{thm:convex-hull}, we obtain the following formulation ILP2 for the SBSP by replacing \eqref{eqn:1c} in ILP1 with $(C3)$ and $(C4)$.
\begin{align} 
	\textrm{min}~~&z \label{ILP2} \tag{ILP2} \\
	\textrm{s.t.}~~&\sum_{t=1}^{T} x_{i,t}=1 & \forall i\in \mathcal{R}_s, s\in \mathcal{S} \label{eqn:2a} \tag{2a} \\
	&\sum_{t=1}^{T}y_{s,t}=1 &\forall s\in \mathcal{S} \label{eqn:2b} \tag{2b} \\
	&\sum_{t^{'}=1}^{t} x_{i,t^{'} }\leq \sum_{t^{'}=1}^{t+l_s} y_{s,t^{'} } &\forall i\in \mathcal{R}_s, s\in \mathcal{S}, t\in [T] \label{eqn:2c} \tag{2c} \\
	&\sum_{t^{'}=1}^{t} y_{s,t^{'} }\leq \sum_{t^{'}=1}^{t} x_{i,t^{'} } &\forall i\in \mathcal{R}_s, s\in \mathcal{S}, t\in [T] \label{eqn:2d} \tag{2d} \\
	&\sum_{i\in \mathcal{R} }\sum_{t^{'}=t}^{t+r_i-1} x_{i,t^{'} }\leq z &\forall t\in [T] \label{eqn:2e} \tag{2e} \\
	&x_{i,t}\in \{0,1\} &\forall i\in \mathcal{R}_s, s\in \mathcal{S}, t\in [T] \label{eqn:2f} \tag{2f}\\
	&y_{s,t}\in \{0,1\} &\forall s\in \mathcal{S}, t\in [T] \label{eqn:2g} \tag{2g}
\end{align}

In the next section we prove that the LP relaxation of \ref{ILP2} has a bounded integrality gap. 


\section{Randomized Rounding Algorithm for the SBSP}\label{sec:rounding}

We develop a dependent randomized rounding algorithm for the SBSP based on the LP relaxation of \ref{ILP2}. The algorithm constructs a feasible integral solution to \ref{ILP2} based on a fractional optimal solution of the LP relaxation of \ref{ILP2}. We show in Theorem \ref{thm:errorbound} that the optimality gap of the resulting integral solution can be bounded by the square root of the optimal solution. We show in Corollary \ref{coro:integrality-gap} that this also gives a bounded integrality gap of the LP relaxation of \ref{ILP2}.

\begin{algorithm}\label{algorithm:rounding_general}
	\caption{(Randomized Rounding Algorithm for the SBSP)}
	Step 1: solve the LP relaxation of \ref{ILP2} to get $\{(x^{*}_s,y^{*}_s)\}_{s\in \mathcal{S}}$
	
	Step 2: for each school $s\in \mathcal{S}$, draw a uniform random number $r_s\sim U[0,1]$
	
	Step 3: for each route $i\in \mathcal{R}_s$, let $t_i=\textrm{argmin}\{\sum_{t=1}^{t_i}x^{*}_{i,t}\geq r_s \}$ and set $x_{i,t_i}=1$
	
	Step 4: for each school $s\in \mathcal{S}$, let $t_s=\textrm{argmin}\{\sum_{t=1}^{t_s}y^{*}_{s,t}\geq r_s \}$ and set $y_{s,t_s}=1$
	
	Step 5: solve the route-to-bus assignment problem using the scheduling from Steps 3 and 4
\end{algorithm}

We illustrate the algorithm with an example of one school and two routes (see Figure \ref{fig:rounding}). For each school and route, we divide a $[0,1]$ segment into $T$ intervals based on the fractional optimal solution $\{(x^{*}_s,y^{*}_s)\}$. We then draw a random number $r_s\sim U[0,1]$ (Step 2) and cut all $[0,1]$ segments at $r_s$ (Steps 3 and 4). For each school (route), the index of the interval where the random number $r_s$ falls is selected as its start (arrival) time.

\begin{figure}
	\centering
	\includegraphics[width=0.7\textwidth]{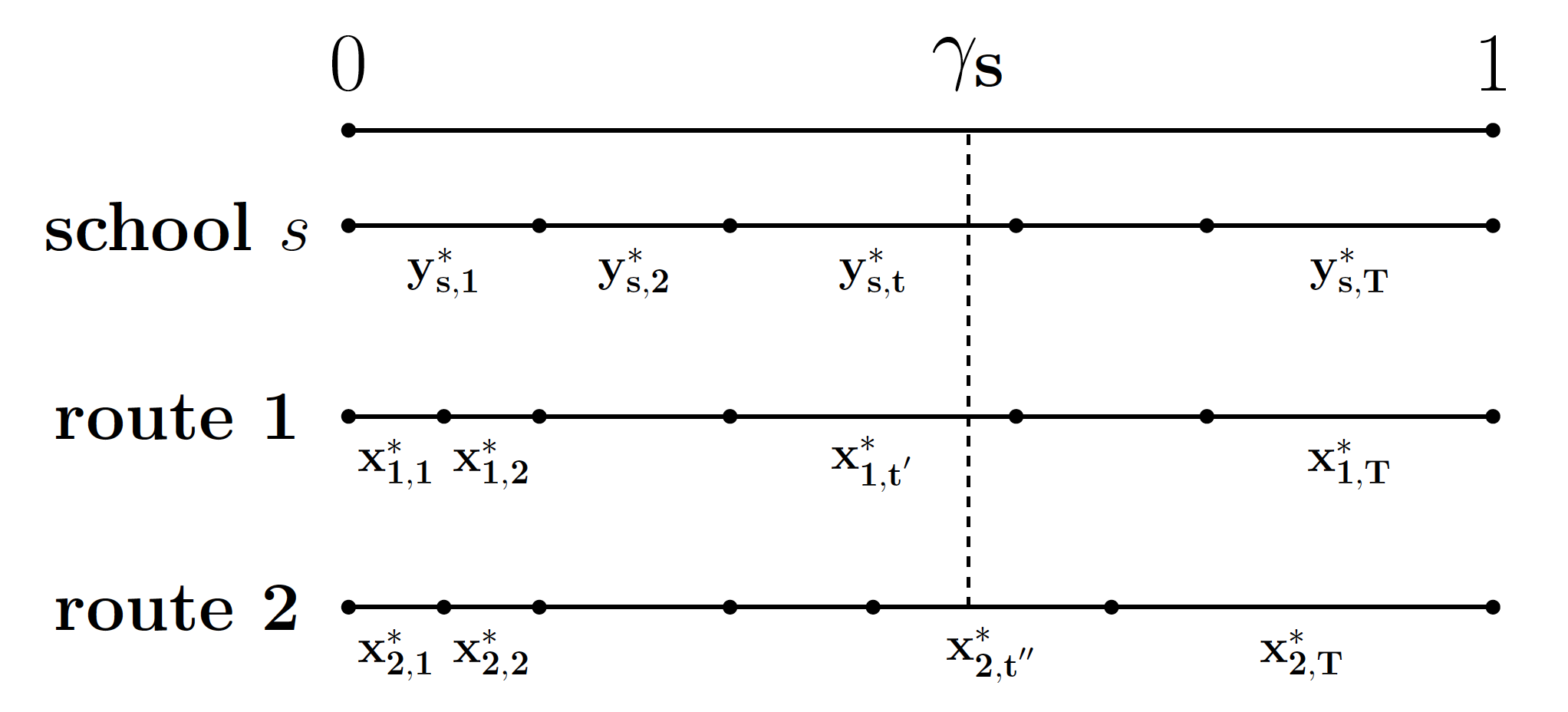}
	\caption{Randomized rounding algorithm}
	\label{fig:rounding}
\end{figure}

Let $\{(x_s,y_s)\}_{s\in \mathcal{S}}$ be the integral solution generated from Algorithm 1. In Proposition \ref{prop:correctness}, we show that the integral solution is always feasible. In Proposition \ref{prop:prob_property}, we show that the fractional optimal solution $\{(x^{*}_s,y^{*}_s)\}_{s\in \mathcal{S}}$ defines the probability that each corresponding variable takes the value 1 in the integer solution generated by Algorithm 1. Finally, we prove in Theorem \ref{thm:errorbound} that the integral solution is near-optimal with constant probability.

\begin{proposition}\label{prop:correctness}
	$\{(x_s,y_s)\}_{s\in \mathcal{S}}$ satisfies constraints \eqref{eqn:2a}-\eqref{eqn:2d} and \eqref{eqn:2f}-\eqref{eqn:2g}.
\end{proposition}

\textbf{Proof.}
In Steps 3 and 4 of Algorithm 1, each route (school) is assigned to exactly one arrival (start) time. Hence, the assignment constraints \eqref{eqn:2a}-\eqref{eqn:2b}, and the binary constraints \eqref{eqn:2f}-\eqref{eqn:2g} naturally hold. To see the time-window constraints \eqref{eqn:2c}-\eqref{eqn:2d} also hold, it suffices to show that 
\[t_i\leq t_s\leq t_i+l_s \]
for any school $s\in \mathcal{S}$ and route $i\in \mathcal{R}_s$.

Recall that both $t_i$ and $t_s$ are determined by the random variable $r_s$ generated in Step 2. From the definition of $t_i$ (Step 3) and $t_s$ (Step 4), we have
\begin{equation}
	\sum_{t=1}^{t_i-1}x^{*}_{i,t}< r_s\leq \sum_{t=1}^{t_i}x^{*}_{i,t}
\end{equation}
and
\begin{equation}
	\sum_{t=1}^{t_s-1}y^{*}_{s,t}< r_s\leq \sum_{t=1}^{t_s}y^{*}_{s,t}.
\end{equation}
Since the fractional solution $\{(x^{*}_s,y^{*}_s)\}_{s\in \mathcal{S}}$ satisfies the time-window constraints \eqref{eqn:2c}-\eqref{eqn:2d}, we have
\begin{equation}\label{eqn:prop2-1}
	\sum_{t=1}^{t_s-1}y^{*}_{s,t}<r_s\leq \sum_{t=1}^{t_i}x^{*}_{i,t}\leq \sum_{t=1}^{t_i+l_s}y^{*}_{s,t}.
\end{equation}
Comparing the left-hand side and the right-hand side of \eqref{eqn:prop2-1}, we have $t_s-1<t_i+l_s$, i.e., $t_s\leq t_i+l_s$.

Similarly, from
\begin{equation}\label{eqn:prop2-2}
	\sum_{t=1}^{t_i-1}x^{*}_{i,t}<r_s\leq \sum_{t=1}^{t_s}y^{*}_{s,t}\leq \sum_{t=1}^{t_s}x^{*}_{i,t}
\end{equation}
we have $t_i-1<t_s$, i.e, $t_s\geq t_i$. This completes the proof of Proposition \ref{prop:correctness}.
\hfill\ensuremath{\square}

\begin{proposition}\label{prop:prob_property}
	$\mathrm{Pr}[x_{i,t}=1]=x^{*}_{i,t}~\forall i\in \mathcal{R}, t\in [T]$; $\mathrm{Pr}[y_{s,t}=1]=y^{*}_{s,t}~\forall s\in \mathcal{S}, t\in [T]$.
\end{proposition}

\textbf{Proof.}
$x_{i,t}=1$ if and only if the random number $r_s$ falls into interval $[\sum_{t^{'}=1}^{t-1}x^{*}_{i,t^{'} },\sum_{t^{'}=1}^{t}x^{*}_{i,t^{'} }]$. Since $r_s\sim U[0,1]$,
\[\mathrm{Pr}[x_{i,t}=1]=\sum_{t^{'}=1}^{t}x^{*}_{i,t^{'} }-\sum_{t^{'}=1}^{t-1}x^{*}_{i,t^{'} }=x^{*}_{i,t}. \]
The proof for $y$ variables follows the same argument.
\hfill\ensuremath{\square}

\begin{theorem}[\textbf{Optimality Gap of Algorithm 1}]\label{thm:errorbound}
	Let $OPT$ be the optimal solution to the SBSP and let $z_{round}$ be the solution from Algorithm 1. We have
	\[z_{round}\leq OPT+\sqrt{2R_{max}\mathrm{log}(2T)OPT}+R_{max}\mathrm{log}(2T)\]
	with probability at least $\frac{1}{2}$, where $R_{max}$ is the maximum number of routes in one school.
\end{theorem}
Theorem \ref{thm:errorbound} shows that the optimality gap of  the rounding algorithm can be bounded by $\sqrt{2R_{max}\mathrm{log}(2T)OPT}+R_{max}\mathrm{log}(2T)$. In reality, $R_{max}$ is usually bounded by school capacity and the term $\mathrm{log}(2T)$ can be treated as a constant. In the case where the number of routes in each school is uniformly bounded and $\mathrm{log}(2T)$ is (or can be bounded by) a constant, our optimality gap is of order $\sqrt{OPT}$. We also note that the probability $\frac{1}{2}$ can be boosted to $(1-\varepsilon)$ for any $\varepsilon>0$ by running Algorithm 1 $\lceil\mathrm{log}({1/\varepsilon})\rceil$ times and selecting the best solution.

\textbf{Proof of Theorem \ref{thm:errorbound}.}\label{proof:errorbound}
Let $\{(x^{*}_s,y^{*}_s)\}_{s\in \mathcal{S}}$ be an optimal fractional solution to the LP relaxation of \ref{ILP2} and let $\{(x_s,y_s)\}_{s\in \mathcal{S}}$ be the integral solution obtained from Algorithm 1. Let $z^{*}_{LP}$ be the optimal objective value of the LP relaxation of \ref{ILP2} and let $z_{round}$ be the objective value of \ref{ILP2} using the integral solution $\{(x_s,y_s)\}_{s\in \mathcal{S}}$.

For any $t\in [T]$, let $z_t=\sum_{i\in \mathcal{R} }\sum_{t^{'}=t}^{t+r_i-1} x_{i,t^{'} }$.  From Proposition \ref{prop:prob_property},
$$\mathbb{E}[z_t]=\mathbb{E}\Big[\sum_{i\in \mathcal{R} }\sum_{t^{'}=t}^{t+r_i-1} x_{i,t^{'} }\Big]=\sum_{i\in \mathcal{R} }\sum_{t^{'}=t}^{t+r_i-1} x^{*}_{i,t^{'} }\leq z^{*}_{LP}.$$

Note that $z_t=\sum_{i\in \mathcal{R} }\sum_{t^{'}=t}^{t+r_i-1} x_{i,t^{'} }=\sum_{s\in \mathcal{S}}\big(\sum_{i\in \mathcal{R}_s }\sum_{t^{'}=t}^{t+r_i-1} x_{i,t^{'} }\big)$ is the sum of $|\mathcal{S}|$ independent random variables ($x$ variables for different schools are independent) where each of them is in $[0,R_{max}]$. The next step is to bound the deviation of $z_t$ from its mean with high probability for each $t\in [T]$, which can be derived from the Chernoff bound (\cite{chernoff1952measure}).

\begin{lemma}[Chernoff Bound]\label{lemma:chernoff}
	Let $a_1,a_2,\ldots,a_n$ be $n$ independent random variables in $[0,M]$ and $A=\sum_{i=1}^{n}a_i$ with mean $\mathbb{E}[A]\leq \mu$. For any $\varepsilon>0$,
	\[\mathrm{Pr}[A>\mu+\varepsilon]\leq \mathrm{exp}\Big(-\frac{\varepsilon^{2}}{(2\mu+\varepsilon)M} \Big).\]
\end{lemma}

For any $t\in [T]$, note that
$$z_t=\sum_{i\in \mathcal{R} }\sum_{t^{'}=t}^{t+r_i-1} x_{i,t^{'} }=\sum_{s\in \mathcal{S}}\Big(\sum_{i\in \mathcal{R}_s }\sum_{t^{'}=t}^{t+r_i-1} x_{i,t^{'} }\Big)$$
is the sum of $|\mathcal{S}|$ independent random variables where each of them lies in $[0,R_{max}]$. Further,
$$\mathbb{E}[z_t]=\sum_{i\in \mathcal{R} }\sum_{t^{'}=t}^{t+r_i-1} x_{i,t^{'} }^{*}\leq z^{*}_{LP}.$$

Let $\varepsilon^{*}$ be the positive root of $\mathrm{exp}\big(-\frac{(\varepsilon^{*})^{2}}{(2z^{*}_{LP}+\varepsilon^{*})R_{max}}\big)=\frac{1}{2T}$. Using Lemma \ref{lemma:chernoff}, we have
$$\mathrm{Pr}\big[z_t>z^{*}_{LP}+\varepsilon^{*} \big]\leq \frac{1}{2T} $$
for all $t\in [T]$.

Applying a union bound over all $t\in [T]$, we have
$$\mathrm{Pr}\big[\mathrm{max}_{t\in [T]}\{z_t\}>z^{*}_{LP}+\varepsilon^{*} \big]\leq T\cdot \frac{1}{2T}= \frac{1}{2}.$$

From constraint \eqref{eqn:2d} in \ref{ILP2},
\begin{align*}
	z_{round}&=\mathrm{max}_{t\in [T]}\Big\{\sum_{i\in \mathcal{R} }\sum_{t^{'}=t}^{t+r_i-1} x_{i,t^{'} } \Big\}=\mathrm{max}_{t\in [T]}\{z_t\}.
\end{align*}

Therefore, with probability at least $\frac{1}{2}$, $z_{round}\leq z^{*}_{LP}+\varepsilon^{*}$.

Finally, it is easy to calculate that
\begin{align*}
	\varepsilon^{*}&=\frac{R_{max}\mathrm{log}(2T)+\sqrt{(R_{max}\mathrm{log}(2T))^{2}+8R_{max}\mathrm{log}(2T)z^{*}_{LP}} }{2}\\
	&\leq\frac{R_{max}\mathrm{log}(2T)+\sqrt{(R_{max}\mathrm{log}(2T))^{2}}+\sqrt{8R_{max}\mathrm{log}(2T)z^{*}_{LP}} }{2}\\
	&=R_{max}\mathrm{log}(2T)+\sqrt{2R_{max}\mathrm{log}(2T)z^{*}_{LP}}.
\end{align*}
Note that $z^{*}_{LP}\leq OPT$, this completes the proof of Theorem \ref{thm:errorbound}.
\hfill\ensuremath{\square}

In fact, the proof of Theorem \ref{thm:errorbound} implies a stronger result, i.e.
\[z_{round}\leq z^{*}_{LP}+R_{max}\mathrm{log}(2T)+\sqrt{2R_{max}\mathrm{log}(2T)z^{*}_{LP}} \]
with probability at least $\frac{1}{2}$. Since $z_{round}\geq OPT$, we have the following corollary on the integrality gap of \ref{ILP2}.
\begin{corollary}[Integrality Gap of \ref{ILP2}]\label{coro:integrality-gap}
	\[z^{*}_{LP}\leq OPT\leq z^{*}_{LP}+R_{max}\mathrm{log}(2T)+\sqrt{2R_{max}\mathrm{log}(2T)z^{*}_{LP}}.\]
\end{corollary}


\section{Robust SBSP} \label{sec:robust}
In practice, the route set of a school district, $\mathcal{R}$, may vary over time due to reasons such as the redesign of bus routes and enrollment changes.  School start times, however, cannot be changed frequently.  Thus, decision makers are interested in school scheduling plans that can work for a long period of time. Motivated by this, we introduce a robust version of the SBSP. To solve the robust SBSP, we generalize results of Sections \ref{sec:ILP} and \ref{sec:rounding} and provide efficient algorithms with provable performance guarantee.

At the time school schedule plans are made, there may be uncertainty in future route sets in terms of the length of routes and the number of routes at each school.  We model these uncertainties in the route set $\mathcal{R}$ with multiple scenarios where each scenario has a set of routes that may differ by number and length of routes.  Scenarios can be derived from historical route data  or from newly designed routes to accommodate projected future enrollment distributions. Our goal is to determine school start times and route schedules for each scenario such that all scenarios share the same school start times. The objective is to minimize the maximum number of buses over all scenarios. We show that the time-indexed formulation and the randomized rounding algorithm can be naturally adapted to solve the robust SBSP and provide similar theoretical optimality gaps.

Let $\Pi$ be the set of scenarios. In each scenario $\pi\in \Pi$, $\mathcal{R}^{\pi}$ is the route set, $\mathcal{R}^{\pi}_s$ is the set of routes for school $s$ and $r^{\pi}_{i}$ is the length of route $i$. The robust SBSP can be modeled as follows.

\begin{align} 
	\textrm{min}~~&z_{robust} \label{ILP3} \tag{ILP-robust} \\
	\textrm{s.t.}~~&\sum_{t=1}^{T} x^{\pi}_{i,t}=1 & \forall i\in \mathcal{R}, \pi\in \Pi \label{eqn:3a} \tag{3a} \\
	&\sum_{t=1}^{T}y_{s,t}=1 &\forall s\in \mathcal{S} \label{eqn:3b} \tag{3b} \\
	&\sum_{t^{'}=1}^{t} x^{\pi}_{i,t^{'} }\leq \sum_{t^{'}=1}^{t+l_s} y_{s,t^{'} } &\forall i\in \mathcal{R}^{\pi}_s, s\in \mathcal{S}, t\in [T], \pi\in \Pi \label{eqn:3c} \tag{3c} \\
	&\sum_{t^{'}=1}^{t} y_{s,t^{'} }\leq \sum_{t^{'}=1}^{t} x^{\pi}_{i,t^{'} } &\forall i\in \mathcal{R}^{\pi}_s, s\in \mathcal{S}, t\in [T], \pi\in \Pi \label{eqn:3d} \tag{3d} \\
	&\sum_{i\in \mathcal{R} }\sum_{t^{'}=t}^{t+r^{\pi}_{i}-1} x^{\pi}_{i,t^{'} }\leq z_{robust} &\forall t\in [T], \pi\in \Pi \label{eqn:3e} \tag{3e} \\
	&x^{\pi}_{i,t}\in \{0,1\} &\forall i\in \mathcal{R}^{\pi}, t\in [T], \pi\in \Pi \label{eqn:3f} \tag{3f}\\
	&y_{s,t}\in \{0,1\} &\forall s\in \mathcal{S}, t\in [T] \label{eqn:3g} \tag{3g}
\end{align}

\ref{ILP3} can be viewed as an extension of \ref{ILP2}. In \ref{ILP3}, binary variable $x^{\pi}_{i,t}$ indicates the arrival time of route $i$ in scenario $\pi$. Since all scenarios share the same school start times, we use binary variable $y_{s,t}$ to indicate the start time of school $s$ across all scenarios. Similar to \ref{ILP2}, \ref{ILP3} consists of assignment constraints \eqref{eqn:3a}-\eqref{eqn:3b}, time-window constraints \eqref{eqn:3c}-\eqref{eqn:3d}, linking constraints \eqref{eqn:3e} and binary constraints \eqref{eqn:3f}-\eqref{eqn:3g}. The objective is minimize the maximum number of buses over all scenarios.

Similar to Algorithm 1, after solving the LP relaxation of \eqref{ILP3}, for any school $s$, the start time of $s$ and arrival times of routes in $s$ over all scenarios are determined simultaneously (through the rounding algorithm) to ensure feasibility. Moreover, we use the same probabilistic approach (detailed in Theorem \ref{thm:errorbound}) to provide an optimality gap of the rounding algorithm for the robust SBSP.

\begin{table}[h]\label{table:compare}
	\begin{center}
		\begin{tabular}{ |c|c|c| }
			\hline
			Number of scenarios & single ($|\Pi|=1$) & multiple ($|\Pi|>1$) \\ \hline
			Number of ILP variables & $O\big(T(|\mathcal{S}|+|\mathcal{R}|)\big)$ & $O\big(T(|\mathcal{S}|+|\mathcal{R}|\cdot|\Pi|)\big)$ \\ \hline
			Number of ILP constraints & $O\big(T(|\mathcal{S}|+|\mathcal{R}|)\big)$ & $O\big(T(|\mathcal{S}|+|\mathcal{R}|\cdot|\Pi|)\big)$ \\ \hline
			Optimality gap of rounding algorithm & $O(\sqrt{R_{max}\mathrm{log}(T)OPT})$ & $O(\sqrt{R_{max}\mathrm{log}(|\Pi|\cdot T)OPT})$ \\ \hline
		\end{tabular}
	\end{center}
	\caption{Comparison between problems with single and multiple scenarios}
\end{table}

Table 1 compares the formulations and solution approaches for the SBSP and the robust SBSP in terms of formulation size and theoretical optimality gap. Compared to \ref{ILP2}, the number of variables and constraints in \ref{ILP3} increases linearly with the number of scenarios and the optimality gap of the rounding algorithm includes an additional term $\sqrt{\mathrm{log(|\Pi|)}}$. In practice, this additional term $\sqrt{\mathrm{log(|\Pi|)}}$ can always be viewed as a constant; the rounding algorithm still provides near-optimal solutions for large-scale instances.


\section{Numerical Study} \label{sec:numerical}
In this section, we conduct numerical experiments on three types of SBSP instances to complement the theoretical findings and generalize these findings to incorporate route transition times.

\begin{itemize}
	\item \textbf{Instances with zero transition time.} We test the rounding algorithm on instances with zero transition time to (i) demonstrate the power of the strengthened formulation (ILP2) on providing tight lower bounds, and (ii) show the near-optimality of the rounding algorithm on large-scale instances.
	
	\item \textbf{Instances with non-zero transition time.} Although the randomized rounding algorithm and its theoretical performance guarantee rely on the assumption of zero transition time, we present a modification of the algorithm which provides both feasible solutions and cost lower bounds for instances with non-zero transition time. This allows us to solve instances with location-dependent transition time.
	
	\item \textbf{Instances with  multiple scenarios, i.e., the robust SBSP.} We illustrate the capability of our approach on providing both cost lower bounds and upper bounds (feasible solutions) when there is uncertainty in the route set $\mathcal{R}$. 
\end{itemize}

The remainder of this section is organized as follows: in Section \ref{subsec:synthetic}, we explain the synthetic data we use to generate all SBSP instances. In Sections \ref{subsec:numerical-zero} and \ref{subsec:numerical-non-zero}, we study the performance of the rounding algorithm on SBSP instances with zero and non-zero transition time, respectively. Finally, in Section \ref{subsec:robust_sbsp}, we provide numerical results for the robust SBSP.

\subsection{Synthetic Data and Experimental Design}\label{subsec:synthetic}
We generate ten SBSP instances where the number of schools and routes are $(10,50), (20,100), \cdots,  (100,500)$. Across all instances, each school has a fixed time window of 20 minutes. School start times and route arrival times can be chosen from $\{1,2,\cdots,120\}$, which corresponds to a 2-hour time window. School start times are further restricted to multiples of 5; i.e. $\{5,10,\cdots,120\}$. The route set for each instance is generated as follows.

\begin{itemize}
	\item Step 1: Generate a 100$\times$100 grid and locate schools on integral points of the grid at random
	
	\item Step 2: For each route, select a random integral point of the grid as its starting point and a random school location as its ending point
	
	\item Step 3: The length of each route is computed by the $L_{1}$ distance between its starting point and ending point (which is a school), divided by a constant travel speed $v_1$
	
	\item Step 4: Round route lengths to the nearest integer
\end{itemize}

The transition time between each pair of routes is computed by the $L_{1}$ distance between the ending point of the first route and the starting point of the second route, divided by a constant transition speed $v_2$.

For travel speed  $v_1$, we select a constant such that the average route length is 30 minutes. For transition speed $v_2$, we use different parameters to generate instances with zero and non-zero transition time. For instances with zero transition time, we let $v_2$ be a large constant. For instances with non-zero transition time, we select a constant $v_2$ such that the average transition time is approximately 15 minutes.

Following the above steps, we generate ten SBSP instances with zero transition time and ten with non-zero transition time. In each instance we perform the randomized rounding algorithm 10 times (after solving the LP relaxation) and select the best solution. In the following subsections, we analyze the performance of the randomized rounding algorithm on both types of instances by comparing (all or part of) the following metrics. 
\begin{enumerate}
	\item The exact solution obtained by solving the ILP (only for instances with zero transition time)
	\item Lower bounds obtained from LP relaxations of ILP1 and ILP2
	\item Upper bound obtained from the randomized rounding algorithm
	\item Improved upper bound by combining the rounding algorithm with a local search heuristic
	\item Upper bound obtained from only a local search heuristic
\end{enumerate}

With these metrics, we compare the exact solution (for instances with zero transition time), cost lower bounds and upper bounds to evaluate the quality of solutions obtained from the rounding algorithm.

Local search heuristics have been widely applied to scheduling problems due to their simplicity and scalability. In the context of school bus scheduling, local search heuristics have been used to optimize school start times and bus routes (\citet{spada2005decision}, \citet{chen2015exact} and \citet{bertsimas2019optimizing}). In each iteration of these heuristics, a typical strategy is to optimize the objective for a small set of schools, keeping decisions for all other schools fixed.

In each iteration of the local search heuristic, we select one school at random and optimize its start time by enumerating all possible start time choices (and shift arrival times for routes in the selected school to maintain feasibility) and then select the one that requires the minimum number of buses. We apply two policies to initialize the local search heuristic: first, we use the solution of the randomized rounding algorithm (the third metric) as a starting point; second, we draw multiple starting points at random and select the best one. For both policies, we set a 2-hour time limit for local search. For the second policy, we use the first 30 minutes to select starting points which is then followed by a 2-hour local search.

\subsection{Numerical Results on Instances with Zero Transition Time}\label{subsec:numerical-zero}

We first illustrate the strength of our ILP formulations by comparing the lower bounds obtained from the LP relaxations of ILP1 and ILP2 with the exact optimal solution (obtained by solving ILP2). As shown in Table 2, the relaxation of ILP2 outperforms that of ILP1 and gives tight lower bound for all instances. Observe that in all instances, rounding up the lower bound from the LP-relaxation of ILP2 provides the integer optimal value. 

\begin{table}[h!]\label{table:ilp1-ilp2}
	\scriptsize
	\centering
	\begin{tabular}{c|c|c|c} 
		\hline
		Instance size & exact solution & lower bound (relaxation of ILP1) & lower bound (relaxation of ILP2) \\ 
		\hline
		\shortstack{\\10 schools \\ 50 routes} & 9 & 7.7 & 8.5 \\ \hline
		\shortstack{\\20 schools \\ 100 routes} & 17 & 15.6 & 16.5 \\ \hline
		\shortstack{\\30 schools \\ 150 routes} & 24  & 22.0 & 23.9 \\ \hline
		\shortstack{\\40 schools \\ 200 routes} & 32 & 30.8 & 31.1 \\ \hline
		\shortstack{\\50 schools \\ 250 routes} & 42 & 38.1 & 40.0 \\ \hline
		\shortstack{\\60 schools \\ 300 routes} & 51 & 48.1 & 50.9 \\ \hline
		\shortstack{\\70 schools \\ 350 routes} & 61 & 58.2 & 60.5 \\ \hline
		\shortstack{\\80 schools \\ 400 routes} & 65 & 61.5 & 64.5 \\ \hline
		\shortstack{\\90 schools \\ 450 routes} & 76 & 71.7 & 75.2 \\ \hline
		\shortstack{\\100 schools \\ 500 routes} & 84 & 80.6 & 83.8 \\ \hline
	\end{tabular}
	\caption{Comparison between LP relaxations of ILP1 and ILP2}
\end{table}

For larger instances where getting the exact solution is intractable, we compare the lower bounds obtained from ILP1 and ILP2 with the feasible solution from the rounding algorithm. We test the algorithms on three sets of instances with different number of schools and routes (each set with five instances). As shown in Table 3, the relative gap between the rounding solution and the lower bound from ILP2 shrinks quickly as the instance size increases. This supports our theoretical findings that ILP2 is a strong formulation for the SBSP and that the randomized rounding algorithm provides high-quality solution for large-scale instances.

\begin{table}[h!]\label{table:large-zero-tran}
	\centering
	\begin{tabular}{c|c|c} 
		\hline
		Instance size & relative gap of ILP1 & relative gap of ILP2\\
		\hline
		\shortstack{\\200 schools \\ 1000 routes} & 10.7\% & 5.7\% \\ \hline
		\shortstack{\\500 schools \\ 2500 routes} & 8.2\% & 3.7\% \\ \hline
		\shortstack{\\1000 schools \\ 5000 routes} & 7.3\% & 2.7\% \\ \hline
	\end{tabular}
	\caption{Large instances with zero transition time (averaged over 5 instances)}
\end{table}

We next show the efficiency of the randomized rounding algorithm by comparing it with other heuristics. Table 4 compares the exact solution, the lower bound obtained from the LP relaxation of ILP2 and three upper bounds (rounding, rounding + local search, and local search) for each SBSP instance.

The randomized rounding algorithm achieves an average optimality gap of $12.9\%$ which is further improved to $10.3\%$ with local search. In contrast, the local search heuristic yields an optimality gap of $37.3\%$. The randomized rounding algorithm outperforms the local search heuristic for all instances in terms of solution quality. Moreover, the rounding algorithm solves the largest instance within 2 minutes while the local search heuristic reaches the 2-hour time limit for the same instance. For all 10 instances, we are able to solve the ILP to optimality (using ILP2) within 60 minutes. Our numerical results show that the randomized rounding algorithm is efficient in practice and provides better solutions compared to the local search heuristic.

\begin{table}[h!]\label{table:zero-tran}
	\scriptsize
	\centering
	\begin{tabular}{c|c|c|c|c|c} 
		\hline
		Instance size & exact solution & lower bound (ILP2) & rounding & rounding + local search & local search \\ 
		\hline
		\shortstack{\\10 schools \\ 50 routes} & 9 & 9 & 11 & 11 & 12 \\ \hline
		\shortstack{\\20 schools \\ 100 routes} & 17 & 17 & 21 & 19 & 24 \\ \hline
		\shortstack{\\30 schools \\ 150 routes} & 24 & 24 & 27 & 27 & 32  \\ \hline
		\shortstack{\\40 schools \\ 200 routes} & 32 & 32 & 36 & 35 & 45  \\ \hline
		\shortstack{\\50 schools \\ 250 routes} & 42 & 41 & 46 & 46 & 56 \\ \hline
		\shortstack{\\60 schools \\ 300 routes} & 51 & 51 & 57 & 55 & 66  \\ \hline
		\shortstack{\\70 schools \\ 350 routes} & 61 & 61 & 66 & 65 & 89 \\ \hline
		\shortstack{\\80 schools \\ 400 routes} & 65 & 65 & 70 & 69 & 92  \\ \hline
		\shortstack{\\90 schools \\ 450 routes} & 76 & 76 & 81 & 80 & 101  \\ \hline
		\shortstack{\\100 schools \\ 500 routes} & 84 & 84 & 94 & 92 & 116 \\ \hline
		\textbf{Optimality gap} & \textbf{0\%} &  & \textbf{12.9\%} & \textbf{10.3\%} & \textbf{37.3\%} \\ \hline
	\end{tabular}
	\caption{Solution comparison for instances with zero transition time}
\end{table}

As noted in the introduction, transportation costs are just a part of the consideration in determining school start times.  Therefore, it is valuable to provide school districts with a set of school start time options that yield comparable transportation costs.  In our experiments, we record all solutions obtained with the rounding algorithm, in addition to the best solutions reported in Table 4.  We find that all generated solutions are within 10\% of the best solution and 73.3\% of solutions are within 5\% of the best solution. Therefore, the rounding algorithm is able to generate multiple high quality solutions efficiently. 

\subsection{Numerical Results on Instances with Non-zero Transition Time}\label{subsec:numerical-non-zero}
We modify our randomized rounding algorithm to accommodate different non-zero transition times and test its performance in numerical experiments. 
\subsubsection{A Modified Randomized Rounding Algorithm}
We have shown the efficiency of the rounding algorithm in solving SBSP instances with zero transition time, as well as its near-optimality for large-scale instances. However, the assumption of zero transition time does not hold in all settings. We provide a modification of the rounding algorithm which is able to provide both feasible solutions and cost lower bounds for instances with non-zero transition time through the following steps.

\begin{itemize}
	\item Step 1: reducing the problem to a tractable instance
	\item Step 2: determining route arrival times using the randomized rounding algorithm
	\item Step 3: obtaining a feasible solution for the original problem
	\item Step 4: obtaining a cost lower bound
\end{itemize}

\underline{\textbf{Step 1: reducing a SBSP instance to a tractable instance}}

We first define a class of SBSP instances with non-zero transition time that are tractable using the rounding algorithm.

\begin{definition}(Tractable instance)
	A SBSP instance is tractable if there exists $t_{i}^{in}$ and $t_{i}^{out}$ for each route $i$ such that the transition time from route $i$ to route $j$ is equal to $t_{i}^{out}+t_{j}^{in}$ for all $(i,j)$.
\end{definition}

For a tractable instance, the transition time $t_{ij}$ can be decomposed into two parts -- an ``outbound transition time" $t_{i}^{out}$ and an ``inbound transition time" $t_{j}^{in}$. By adding them into the route length (i.e. a route with length $r_i$ becomes one with length $t_{i}^{in}+r_i+t_{i}^{out}$), we can further reduce it to an instance with zero transition time. Therefore, the randomized rounding algorithm can directly be applied to solve a tractable SBSP instance.

Given a SBSP instance with non-zero transition time, our goal is to first find a tractable instance that approximates this instance. In other words, we find $t_{i}^{in}$ and $t_{i}^{out}$ such that $\textrm{tran}_{ij}$ (transition time in the given instance) can be approximated by $t_{i}^{out}+t_{j}^{in}$. To that end, we fit $t_{i}^{in}$ and $t_{i}^{out}$ into the following linear regression model.

\begin{equation}\label{eqn:regression}
	\textrm{tran}_{ij}=t_{i}^{out}+t_{j}^{in}+\varepsilon_{ij}
\end{equation}
In practice, we use the least square regression (i.e. minimizing $\sum \varepsilon_{ij}^{2}$) to obtain $t_{i}^{in}$ and $t_{i}^{out}$.

\underline{\textbf{Step 2: determining route arrival times}}

Having obtained a tractable approximation from Step 1, we reduce it to an instance with zero transition time. In Step 2, we apply the randomized rounding algorithm to solve the tractable instance (obtained from Step 1) and determine the start time of each school and the arrival time of each route for the original problem.

\underline{\textbf{Step 3: obtaining a feasible solution}}

After determining the arrival time for each route, we find the minimum number of buses to serve all the routes using the following proposition.

\begin{proposition}\label{prop:bipartite-matching}
	Given a route set $\mathcal{R}$ and an arrival time of each route, consider a bipartite graph $G=(U,V; E)$ where $U=V=\mathcal{R}$. For any $r_1\in U$ and $r_2\in V$, $(r_1,r_2)\in E$ if and only if route $r_2$ can be served after route $r_1$ using the same bus. Let $|G_{BM}|$ be the size of the maximum bipartite matching of graph $G$. Then, the minimum number of buses to complete all routes is equal to $|\mathcal{R}|-|G_{BM}|$.
\end{proposition}

From Proposition \ref{prop:bipartite-matching}, finding the minimum number of buses is equivalent to solving a maximum bipartite matching problem, which can be done in polynomial time. To see the correctness of Proposition \ref{prop:bipartite-matching}, each route connection pattern corresponds to a bipartite matching of graph $G$ where $(r_1,r_2)\in E$ if and only if route $r_2$ is served immediately after route $r_1$ using the same bus. Moreover, the size of this bipartite matching is equal to $|\mathcal{R}|$ minus the number of buses. Therefore, minimizing the number of buses is equivalent to finding a maximum bipartite matching on $G$.

\underline{\textbf{Step 4: obtaining a cost lower bound}}

To get a lower bound on the number of buses, we enforce the error term $\varepsilon_{ij}$ in the regression model \eqref{eqn:regression} to be non-negative so that the estimated transition time $t_{i}^{out}+t_{j}^{in}$ is less than or equal to $\textrm{tran}_{ij}$. By inserting the $t_{i}^{in}$ and $t_{i}^{out}$ into ILP2 and its LP relaxation, we are able to obtain a lower bound on the number of buses.

\subsubsection{Numerical Results}
We compare different algorithms on SBSP instances with non-zero transition time. Since we are not able to compute the exact solution, we compare the lower bound obtained using the modified rounding algorithm and the upper bounds (feasible solutions) obtained from the rounding heuristic, the local search heuristic, and the combined heuristic.

As shown in Table 5, the optimality gap of the rounding algorithm is $37.1\%$ which can be improved to $35.4\%$ with local search heuristic. Local search with random restarts yields an optimality gap of $55.6\%$. The optimality gap of the rounding algorithm grows quickly compared to instances with zero transition time. One possible reason is that transition times are under-estimated in the regression model in Step 1 so that we are not able to get a strong lower bound as we did for instances with zero transition time. Despite that, the rounding algorithm provides near locally-optimal solutions and outperforms the local search heuristic in terms of solution quality. Although we believe the large optimality gap mainly comes from the lower bound side, it remains unknown whether the non-zero transition time can be better incorporated into the rounding algorithm, which is a potential topic for future research.

\begin{table}[h!]\label{table:nonzero-tran}
	\centering
	\begin{tabular}{c|c|c|c|c} 
		\hline
		Instance size & lower bound (ILP2) & rounding & rounding + local search & local search \\ 
		\hline
		\shortstack{\\10 schools \\ 50 routes} & 10 & 15 & 15 & 15 \\ \hline
		\shortstack{\\20 schools \\ 100 routes} & 19 & 26 & 26 & 31 \\ \hline
		\shortstack{\\30 schools \\ 150 routes} & 26 & 37 & 37 & 42 \\ \hline
		\shortstack{\\40 schools \\ 200 routes} & 37 & 50 & 48 & 57 \\ \hline
		\shortstack{\\50 schools \\ 250 routes} & 45 & 59 & 59 & 68 \\ \hline
		\shortstack{\\60 schools \\ 300 routes} & 52 & 70 & 70 & 77 \\ \hline
		\shortstack{\\70 schools \\ 350 routes} & 63 & 85 & 83 & 101 \\ \hline
		\shortstack{\\80 schools \\ 400 routes} & 67 & 93 & 90 & 102 \\ \hline
		\shortstack{\\90 schools \\ 450 routes} & 75 & 101 & 100 & 123 \\ \hline
		\shortstack{\\100 schools \\ 500 routes} & 86 & 114 & 112 & 130 \\ \hline
		\textbf{Average gap} & \textbf{0\%} & \textbf{37.1\%} & \textbf{35.4\%} & \textbf{55.6\%} \\ \hline
	\end{tabular}
	\caption{Instances with non-zero transition time}
\end{table}

\subsection{Numerical Study for the Robust SBSP}\label{subsec:robust_sbsp}
We construct instances for the robust SBSP from the 20 instances (ten with zero transition time and ten with non-zero) generated above. We expand each of the 20 instances to five scenarios by incorporating two types of uncertainty into the route set $\mathcal{R}$: (i) uncertainty in the number of routes, and (ii) uncertainty in travel times. For uncertainty (i), we increase or decrease the number of routes in each school by 1, each with probability $15\%$. For uncertainty (ii), we add a random integer from $[-5,5]$ into the length of each route. The transition times are perturbed in a similar way. We construct three groups of instances where each group incorporates one or both types of uncertainty.

For each instance in each group, we compare the cost lower bound and the cost upper bound obtained from the modified rounding algorithm. Tables 6-8 summarize result for each group of instances. Compared to Tables 4 and 5, the average relative gap of the rounding algorithm increases from $12.9\%$ to $24.1\%\sim 26.7\%$ for instances with zero transition time, and from $37.1\%$ to $42.7\%\sim 47.1\%$ for instances with non-zero transition time. We point out the number of buses can be highly sensitive to small changes in route lengths and number of routes (e.g. adding a new route may require a new bus). As a result, it is harder to get tight lower bounds for the robust SBSP by combining information from different scenarios.

For instances with zero transition time, we observe that the relative gap between the upper and lower bounds shrinks as instance size goes up. This shows that the formulation \ref{ILP3} and the modified randomized rounding algorithm remains effective for the robust SBSP, especially for large-scale instances. For instances with non-zero transition time, our approach is capable of providing both feasible solutions and cost lower bounds without increasing the relative gaps by too much. From a strategic planning perspective, our approach provides an efficient way to identify scheduling plans with sufficient potential for cost savings (and remove from consideration those without sufficient savings) before continuing with more detailed operational analysis.

\begin{table}\label{table:robust-num}
	\centering
	\begin{tabular}{c|c|c|c|c}
		\hline
		Instance size &
		\multicolumn{2}{c}{zero transition} &
		\multicolumn{2}{c}{non-zero transition}  \\ \hline
		
		& lower bound & rounding & lower bound & rounding \\ \hline
		\shortstack{\\10 schools \\ 50 routes} & 9 & 12 & 10 & 12 \\ \hline
		\shortstack{\\20 schools \\ 100 routes} & 16 & 22 & 18 & 25 \\ \hline
		\shortstack{\\30 schools \\ 150 routes} & 24 & 33 & 27 & 41 \\ \hline
		\shortstack{\\40 schools \\ 200 routes} & 33 & 45 & 35 & 54 \\ \hline
		\shortstack{\\50 schools \\ 250 routes} & 39 & 50 & 43 & 62 \\ \hline
		\shortstack{\\60 schools \\ 300 routes} & 48 & 59 & 53 & 78 \\ \hline
		\shortstack{\\70 schools \\ 350 routes} & 56 & 66 & 60 & 86 \\ \hline
		\shortstack{\\80 schools \\ 400 routes} & 66 & 78 & 69 & 98 \\ \hline
		\shortstack{\\90 schools \\ 450 routes} & 71 & 80 & 74 & 107 \\ \hline
		\shortstack{\\100 schools \\ 500 routes} & 81 & 91 & 84 & 121 \\ \hline
		\textbf{Average gap} &  & \textbf{25.7\%} &  & \textbf{42.7\%} \\ \hline
	\end{tabular}
	\caption{Robust instances with uncertainty in number of routes}
\end{table}

\begin{table}\label{table:robust-length}
	\centering
	\begin{tabular}{c|c|c|c|c}
		\hline
		Instance size &
		\multicolumn{2}{c}{zero transition} &
		\multicolumn{2}{c}{non-zero transition}  \\ \hline
		
		& lower bound & rounding & lower bound & rounding \\ \hline
		\shortstack{\\10 schools \\ 50 routes} & 9 & 12 & 10 & 13 \\ \hline
		\shortstack{\\20 schools \\ 100 routes} & 17 & 23 & 19 & 30 \\ \hline
		\shortstack{\\30 schools \\ 150 routes} & 26 & 33 & 27 & 39 \\ \hline
		\shortstack{\\40 schools \\ 200 routes} & 34 & 43 & 36 & 51 \\ \hline
		\shortstack{\\50 schools \\ 250 routes} & 42 & 55 & 44 & 66 \\ \hline
		\shortstack{\\60 schools \\ 300 routes} & 51 & 64 & 52 & 76 \\ \hline
		\shortstack{\\70 schools \\ 350 routes} & 57 & 67 & 62 & 87 \\ \hline
		\shortstack{\\80 schools \\ 400 routes} & 65 & 77 & 72 & 103 \\ \hline
		\shortstack{\\90 schools \\ 450 routes} & 72 & 82 & 79 & 114 \\ \hline
		\shortstack{\\100 schools \\ 500 routes} & 82 & 92 & 85 & 125 \\ \hline
		\textbf{Average gap} &  & \textbf{24.1\%} &  & \textbf{44.2\%} \\ \hline
	\end{tabular}
	\caption{Robust instances with uncertainty in travel times}
\end{table}

\begin{table}\label{table:robust-both}
	\centering
	\begin{tabular}{c|c|c|c|c}
		\hline
		Instance size &
		\multicolumn{2}{c}{zero transition} &
		\multicolumn{2}{c}{non-zero transition}  \\ \hline
		
		& lower bound & rounding & lower bound & rounding \\ \hline
		\shortstack{\\10 schools \\ 50 routes} & 8 & 12 & 10 & 14 \\ \hline
		\shortstack{\\20 schools \\ 100 routes} & 17 & 25 & 19 & 29 \\ \hline
		\shortstack{\\30 schools \\ 150 routes} & 25 & 33 & 27 & 41 \\ \hline
		\shortstack{\\40 schools \\ 200 routes} & 32 & 38 & 34 & 49 \\ \hline
		\shortstack{\\50 schools \\ 250 routes} & 41 & 53 & 44 & 66 \\ \hline
		\shortstack{\\60 schools \\ 300 routes} & 46 & 60 & 51 & 76 \\ \hline
		\shortstack{\\70 schools \\ 350 routes} & 54 & 65 & 58 & 86 \\ \hline
		\shortstack{\\80 schools \\ 400 routes} & 66 & 76 & 69 & 103 \\ \hline
		\shortstack{\\90 schools \\ 450 routes} & 73 & 82 & 80 & 113 \\ \hline
		\shortstack{\\100 schools \\ 500 routes} & 84 & 94 & 87 & 126 \\ \hline
		\textbf{Average gap} &  & \textbf{26.7\%} &  & \textbf{47.1\%} \\ \hline
	\end{tabular}
	\caption{Robust instances with uncertainty in both number of routes and travel times}
\end{table}


\section{Discussion} \label{sec:conclusion}
In this paper, we develop a time-indexed ILP formulation for the SBSP and use its LP relaxation to develop a randomized rounding algorithm that yields near-optimal solutions for large-scale instances in polynomial time. Our methodologies are also adapted to solve a robust version of the SBSP. Our computational study suggests that the rounding algorithm is efficient and effective to solve problems where the transition time between routes is a constant. Our approach is generalized to instances with non-constant transition times as well as to obtain robust solutions for multiple scenarios.

In this work, we assume that bus routes are taken as inputs in the scheduling problem. For future research, we would like to build a unified framework that jointly solves route design problem and school bus scheduling problem. Another important question is to integrate our results with other strategic decisions, such as school boundary design and school location selection. Our approach can serve as a toolbox to help decision makers understand the effect on school transportation when making policy changes. Both generalizations capture real life problems, and we will continue working with our partner school district to provide solution approaches that are efficient, robust and easy to implement.
\section*{Acknowledgement}
This project is supported by the National Science Foundation (CMMI-1727744). The authors thank the administrative staff of Evanston/Skokie Public School District 65.

\newpage
\setcounter{page}{1}
\small

\bibliographystyle{plainnat}
\bibliography{Literature_scheduling}

\end{document}